\definecolor{webgreen}{rgb}{0,.5,0}
\definecolor{webbrown}{rgb}{.6,0,0}
\newtheorem{theorem}{Theorem}
\newtheorem{corollary}[theorem]{Corollary}
\newtheorem{lemma}[theorem]{Lemma}
\newenvironment{proof}[1][Proof]{\noindent\textbf{#1.} }{\ \rule{0.5em}{0.5em}}
\begin{document}

\begin{center}
\vskip1cm

{\LARGE \textbf{Simultaneous approximation by operators of exponential type}}

\vspace{2cm}

{\large Ulrich Abel}\\[0pt]
\textit{Technische Hochschule Mittelhessen}\\[0pt]
\textit{Fachbereich MND}\\[0pt]
\textit{Wilhelm-Leuschner-Stra\ss e 13, 61169 Friedberg, }\\[0pt]
\textit{Germany}\\[0pt]
\href{mailto:Ulrich.Abel@mnd.thm.de}{\texttt{Ulrich.Abel@mnd.thm.de}}
\end{center}

\vspace{2cm}

{\large \textbf{Abstract.}}

\bigskip

There are many results on the simultaneous approximation by sequences of
special positive linear operators. In the year 1978, Ismail and May as well
as Volkov independently studied operators of exponential type covering the
most classical approximation operators. In this paper we study asymptotic
properties of these class of operators. We prove that under certain
conditions, asymptotic expansions for sequences of operators belonging to a
slightly larger class of operators, can be differentiated term-by-term. This
general theorem contains several results which were previously obtained by
several authors for concrete operators. One corollary states, that the
complete asymptotic expansion for the Bernstein polynomials can be
differentiated term-by-term. This implies a well-known result on the
Voronovskaja formula obtained by Floater.

\bigskip

\smallskip \emph{Mathematics Subject Classification (2020):} 41A36%
, 41A25
, 41A60
.

\smallskip \emph{Keywords:} Approximation by positive operators, operators
of exponential type, rate of convergence, degree of approximation,
asymptotic expansions.

\vspace{2cm}


\section{Introduction}

\label{intro}


Let $e_{r}$ $\left( r=0,1,2,\ldots \right) $ denote the monomials defined by 
$e_{r}\left( x\right) =x^{r}$. For each real number $x$, define the function 
$\psi _{x}$ by $\psi _{x}=e_{1}-xe_{0}$.

Nearly all approximation operators $L_{n}$ possess asymptotic properties.
The most prominent instance are the Bernstein polynomials $B_{n}$. In 1932,
Voronovoskaja \cite{Voronovskaja-1932} showed the celebrated formula 
\begin{equation}
\lim_{n\rightarrow \infty }n\left( \left( B_{n}f\right) \left( x\right)
-f\left( x\right) \right) =\frac{1}{2}x\left( 1-x\right) f^{\prime \prime
}\left( x\right) ,  \label{voronovskaja}
\end{equation}%
for all real functions $f$ bounded on $\left[ 0,1\right] $ which have a
second order derivative. In that same year, Bernstein \cite{Bernstein-1932}
proved, for each $x\in \left( 0,1\right) $ and $q\in \mathbb{N}$, for which $%
f^{\left( 2q\right) }\left( x\right) $ exists, that 
\begin{equation}
\left( B_{n}f\right) \left( x\right) =\sum_{s=0}^{2q}\left( B_{n}\psi
_{x}^{s}\right) \left( x\right) \frac{f^{\left( s\right) }\left( x\right) }{%
s!}+o\left( n^{-q}\right) \text{ }\qquad \left( n\rightarrow \infty \right) .
\label{Bernstein-asymptotic}
\end{equation}%
Perhaps this is the first asymptotic expansion for any polynomial operator.
Noting that $\left( B_{n}\psi _{x}^{0}\right) \left( x\right) =1$, $\left(
B_{n}\psi _{x}^{1}\right) \left( x\right) =0$, $\left( B_{n}\psi
_{x}^{2}\right) \left( x\right) =x\left( 1-x\right) /n$, reveals that Eq.~$%
\left( \ref{voronovskaja}\right) $ is the special case $q=1$ of Eq.~$\left( %
\ref{Bernstein-asymptotic}\right) $. In other words, it is an asymptotic
expansion of order $1$ (as $\left( B_{n}f\right) \left( x\right) =f\left(
x\right) +x\left( 1-x\right) f^{\prime \prime }\left( x\right) /\left(
2n\right) +o\left( n^{-1}\right) $) and the constructive Weierstrass theorem
is one of order $0$ (as $\left( B_{n}f\right) \left( x\right) =f\left(
x\right) +o\left( 1\right) $) as $n\rightarrow \infty $. A more detailled
inspection of the central moments $B_{n}\psi _{x}^{s}$ shows that Eq.~$%
\left( \ref{Bernstein-asymptotic}\right) $ can be written as a complete
asymptotic expansion in the more explicit form 
\begin{equation}
\left( B_{n}f\right) \left( x\right) \sim \sum_{k=0}^{\infty }\frac{%
a_{k}\left( f,x\right) }{n^{k}}\text{ }\qquad \left( n\rightarrow \infty
\right) ,  \label{Bernstein-complete asymptotic expansion}
\end{equation}%
where the coefficients $a_{k}\left( f,x\right) $ are independent of $n$.
They are linear combinations of $f$ and finitely many derivatives of $f$
multiplied by certain polynomials. Explicit expressions for $a_{k}\left(
f,x\right) $ involving Stirling numbers of the first and second kind can be
derived \cite{abel-ATA2000a}. Eq.~$\left( \ref{Bernstein-asymptotic}\right) $
means that, for each $q=0,1,\ldots ,$ 
\begin{equation*}
\left( B_{n}f\right) \left( x\right) =\sum_{k=0}^{q}\frac{a\left( f,x\right) 
}{n^{k}}+o\left( n^{-q}\right) \text{ }\qquad \left( n\rightarrow \infty
\right) .
\end{equation*}%
Several authors studied asymptotic relations for the derivatives $\left(
B_{n}f\right) ^{\left( r\right) }$.

If a sequence of operators $L_{n}$ has a complete asymptotic expansion of
the form 
\begin{equation}
\left( L_{n}f\right) \left( x\right) \sim \sum_{k=0}^{\infty }\frac{%
a_{k}\left( f,x\right) }{n^{k}}\text{ }\qquad \left( n\rightarrow \infty
\right) ,  \label{Lnf-asymptotic}
\end{equation}%
it is quite natural to expect that the derivatives $\left( L_{n}f\right)
^{\left( r\right) }$ have a complete asymptotic expansion which can be
derived by term-by-term differentiation of the coefficients $a_{k}\left(
f,x\right) $ with respect to $x$, i.e., 
\begin{equation}
\left( L_{n}f\right) ^{\left( r\right) }\left( x\right) \sim
\sum_{k=0}^{\infty }\frac{a_{k}^{\left[ r\right] }\left( f,x\right) }{n^{k}}%
\text{ }\qquad \left( n\rightarrow \infty \right)
\label{Lnf-asymptotic-simultan}
\end{equation}%
with $a_{k}^{\left[ r\right] }\left( f,x\right) =\left( d/dx\right)
^{r}a_{k}\left( f,x\right) $. One of the first such formulas were shown for
the first derivative of the Bernstein--Kantorovich operators \cite[Theorem~3]%
{abel-ATA-1998}. The further step came in 2002 when A.-J. L\'{o}pez-Moreno,
J. Mart\'{\i}nez-Moreno, and F.-J. Mu\~{n}oz-Delgado \cite%
{Lopez-moreno-usw-Maratea-2002} established the asymptotic expansion for the
derivatives of the $B_{n}f$, namely 
\begin{equation*}
\left( B_{n}f\right) ^{\left( r\right) }\left( x\right) =f^{\left( r\right)
}\left( x\right) +\sum_{s=1}^{2q}\frac{1}{s!}\left( \left( B_{n}\psi
_{x}^{s}\right) \left( x\right) \cdot f^{\left( s\right) }\left( x\right)
\right) ^{\left( r\right) }+o\left( n^{-q}\right) \text{ }\qquad \left(
n\rightarrow \infty \right) ,
\end{equation*}%
provided that $f$ is sufficiently smooth on the interval $\left[ 0,1\right] $%
. The essential property was the so-called $\varphi $-convexity of the
operators. In \cite{Lopez-moreno-usw-JCAA-2003} A.-J. L\'{o}pez-Moreno and
F.-J. Mu\~{n}oz-Delgado extended the concept of $\varphi $-convexity in
order to prove general results that allow to calculate the asymptotic
formulae of the partial derivatives of sequences of multivariate linear
operators. In 2005, Floater \cite[Theorem 2]{Floater-JAT-2005} showed that
Voronovskaja's formula $\left( \ref{voronovskaja}\right) $ for the Bernstein
polynomials can be differentiated, i.e., 
\begin{equation}
\lim_{n\rightarrow \infty }n\left( \left( B_{n}f\right) ^{\left( r\right)
}\left( x\right) -f^{\left( r\right) }\left( x\right) \right) =\left( \frac{%
x\left( 1-x\right) f^{\prime \prime }\left( x\right) }{2}\right) ^{\left(
r\right) },  \label{Floater-Bernstein}
\end{equation}%
uniformly on $\left[ 0,1\right] $, for functions $f\in C^{r+2}\left[ 0,1%
\right] $. This result is a special case of \cite[Theorem 2]%
{Lopez-moreno-usw-JCAA-2003} which was found after completion of \cite%
{Floater-JAT-2005}. However, Floater used a completely different approach,
viz., remainder formulas for the Bernstein error in terms of divided
differences. This technique was refined by Abel \cite{Abel-RIMA-2019} for
simultaneous approximation by Bernstein--Chlodovsky polynomials. Further
results on asymptotic properties of sequences of operators can be found in 
\cite{Goodman-CA-1995}, \cite{Lopez-moreno-usw-JCAA-2003}, \cite%
{Walz-JCAM-2000}. Results on the differentiation of Voronovskaja's formula,
for general classes of operators including that of exponential type, can be
found in the recent work by \cite{Holhos-RIMA-2019}, \cite{Holhos-PMJ-2022}, 
\cite{Holhos-BullMalays-2021}. In the recent 2022 article Xiang studies
term-by-term differentiation of asymptotic expansions for certain Bernstein
type operators by probabilistic methods.

The purpose of this paper is to prove that operators of exponential type
possess, for sufficiently smooth functions $f$, a complete asymptotic
expansion $\left( \ref{Lnf-asymptotic}\right) $, which can be differentiated
term-by-term, i.e., $\left( \ref{Lnf-asymptotic-simultan}\right) $ holds
with $a_{k}^{\left[ r\right] }\left( f,x\right) =\left( d/dx\right)
^{r}a_{k}^{\left[ 0\right] }\left( f,x\right) $, for each positive integer $%
r $. The most prominent approximation operators are of exponential type.
Operators of exponential type preserve linear functions. In fact, we
consider a more general class of operators, which not necessarily have this
property, but preserve constant functions.

Using $a_{0}\left( f,x\right) =f\left( x\right) $, a direct consequence of
our results is the fact that the Voronovskaja-type formula, i.e., 
\begin{equation*}
\lim_{n\rightarrow \infty }n\left( \left( L_{n}f\right) \left( x\right)
-f\left( x\right) \right) =a_{1}\left( f,x\right) ,
\end{equation*}%
can by differentiated term-by-term, i.e.,%
\begin{equation*}
\lim_{n\rightarrow \infty }n\left( \left( L_{n}f\right) ^{\left( r\right)
}\left( x\right) -f^{\left( r\right) }\left( x\right) \right) =\left( \frac{d%
}{dx}\right) ^{r}a_{1}\left( f,x\right) .
\end{equation*}

\section{Operators of exponential type}

Let $I$ be a (finite or infinite)\ real interval. Recall that throughout the
paper we use the denotation $e_{r}\left( t\right) =t^{r}$ and $\psi
_{x}\left( t\right) =t-x$ $\left( t,x\in \mathbb{R}\right) $. In 1978,
Ismail and May \cite{Ismail-May-JMAA-1978} as well as Volkov \cite%
{Volkov-1978} studied linear operators $S_{n}:C\left( I\right) \rightarrow
C\left( I\right) $ which preserve constant functions and satisfy the
differential equation 
\begin{equation}
\left( S_{n}f\right) ^{\prime }\left( x\right) =\frac{n}{\varphi \left(
x\right) }\left( S_{n}\left( \psi _{x}f\right) \right) \left( x\right) ,
\label{ODE-exponential-type}
\end{equation}%
where $\varphi $ is an infinitely often differentiable function with $%
\varphi \left( x\right) \neq 0$ on $I$. If $I$ is an infinite interval, the
class $C\left( I\right) $ may be restricted to functions satisfying a
certain growth condition. Such operators $S_{n}$ are called operators of
exponential type. There is a huge literature dealing with them. Operators of
exponential type preserve linear functions, i.e., $S_{n}e_{r}=e_{r}$, for $%
r\in \left\{ 0,1\right\} $. The second moment $S_{n}e_{2}$, which is crucial
for the approximation properties of $S_{n}f$ as $n\rightarrow \infty $, is
given by $S_{n}e_{2}=e_{2}+\varphi /n$ (see \cite[Proposition 2.1]%
{Ismail-May-JMAA-1978}). Denote the central moments of the operators $S_{n}$
by 
\begin{equation*}
\mu _{n,s}\left( x\right) =\left( S_{n}\psi _{x}^{s}\right) \left( x\right) .
\end{equation*}%
In particular, we have $\mu _{n,0}=1$, $\mu _{n,1}=0$, and $\mu
_{n,2}=\varphi /n$. The recursive formula 
\begin{equation}
\mu _{n,s+1}=\frac{\varphi }{n}\left( s\mu _{n,s-1}+\mu _{n,s}^{\prime
}\right)  \label{recursive-exponential-type}
\end{equation}%
is an easy consequence of the differential equation\ $\left( \ref%
{ODE-exponential-type}\right) $ (see \cite[Eq. (3)]{Volkov-1978}, \cite[Eq.
(2.2)]{Ismail-May-JMAA-1978}). It easily delivers, for small values of $s$
the following instances:\ 
\begin{eqnarray*}
\mu _{n,3} &=&\varphi \varphi ^{\prime }n^{-2} \\
\mu _{n,4} &=&3\varphi ^{2}n^{-2}+\left( \varphi \left( \varphi ^{\prime
}\right) ^{2}+\varphi ^{2}\varphi ^{\prime \prime }\right) n^{-3} \\
\mu _{n,5} &=&10\varphi ^{2}\varphi ^{\prime }n^{-3}+\left( \varphi \left(
\varphi ^{\prime }\right) ^{3}+4\varphi ^{2}\varphi ^{\prime }\varphi
^{\prime \prime }+\varphi ^{3}\varphi ^{\prime \prime \prime }\right) n^{-4}
\\
\mu _{n,6} &=&15\varphi ^{3}n^{-3}+\left( \varphi ^{2}\left( \varphi
^{\prime }\right) ^{2}+15\varphi ^{3}\varphi ^{\prime \prime }\right) n^{-4}
\\
&&+\left( \varphi \left( \varphi ^{\prime }\right) ^{4}+11\varphi ^{2}\left(
\varphi ^{\prime }\right) ^{2}\varphi ^{\prime \prime }+4\varphi ^{3}\left(
\varphi ^{\prime \prime }\right) ^{2}+7\varphi ^{3}\varphi ^{\prime }\varphi
^{\prime \prime \prime }+\varphi ^{4}\varphi ^{\prime \prime \prime \prime
}\right) n^{-5}
\end{eqnarray*}

If $\varphi $ is a quadratic polynomial then $\mu _{n,s}\left( x\right) $ is
a polynomial in $x$ of degree at most $s$. For each $x\in I$, the recursive
formula implies that 
\begin{equation*}
\mu _{n,s}\left( x\right) =O\left( n^{-\left\lfloor \left( s+1\right)
/2\right\rfloor }\right) \text{ }\qquad \left( n\rightarrow \infty \right)
\qquad \text{for }s=0,1,\ldots .
\end{equation*}%
A further consequence of the recursive formula is the representation 
\begin{equation}
\mu _{n,s}\left( x\right) =\sum_{j=\left\lfloor \left( s+1\right)
/2\right\rfloor }^{s}g_{s,j}\left( x\right) n^{-j},
\label{representation-mu-by-g}
\end{equation}%
where $g_{s,j}$ are polynomials of degree at most $s$. In particular, $%
g_{0,0}\left( x\right) =1$, $g_{1,1}\left( x\right) =0$. Ismail and May \cite%
[Proposition 2 (iii)]{Ismail-May-JMAA-1978} showed that $g_{2s,2s}\left(
x\right) $ is a constant multiple of $\varphi ^{s}\left( x\right) $ and $%
g_{2s+1,2s+1}\left( x\right) $ is a constant multiple of $\varphi ^{s}\left(
x\right) \varphi ^{\prime }\left( x\right) $. In fact, it can be shown that 
\begin{eqnarray*}
g_{2s,2s}\left( x\right) &=&\frac{\left( 2s\right) !}{2^{s}s!}\varphi
^{s}\left( x\right) , \\
g_{2s+1,2s+1}\left( x\right) &=&\frac{s\cdot \left( 2s+1\right) !}{3\cdot
2^{s}s!}\varphi ^{s}\left( x\right) \varphi ^{\prime }\left( x\right) .
\end{eqnarray*}%
By Eq. $\left( \ref{representation-mu-by-g}\right) $, the relation 
\begin{equation*}
\left( S_{n}f\right) \left( x\right) =\sum_{s=0}^{2q}\left( \mu _{n,s}\left(
x\right) \frac{f^{\left( s\right) }\left( x\right) }{s!}\right) +o\left(
n^{-q}\right) \text{ }\qquad \left( n\rightarrow \infty \right) ,
\end{equation*}%
implies the complete asymptotic expansion 
\begin{equation*}
\left( S_{n}f\right) \left( x\right) =\sum_{k=0}^{q}\frac{1}{n^{k}}%
\sum_{s=k}^{2k}\frac{f^{\left( s\right) }\left( x\right) }{s!}g_{s,k}\left(
x\right) +o\left( n^{-q}\right) \text{ }\qquad \left( n\rightarrow \infty
\right) ,
\end{equation*}%
for appropriate functions sufficiently smooth at $x$.

\section{Main result}

Recently, Holho\c{s} \cite[Eq. (2)]{Holhos-RIMA-2019} (see also \cite[Eq.
(2.1)]{Holhos-PMJ-2022}) extended the class of exponential type operators,
by considering the more general differential equation 
\begin{equation}
\left( S_{n}f\right) ^{\prime }\left( x\right) =\frac{\lambda _{n}}{\varphi
\left( x\right) }\left( \left( S_{n}\left( \psi _{x}f\right) \right) \left(
x\right) -\left( S_{n}\psi _{x}\right) \left( x\right) \cdot \left(
S_{n}f\right) \left( x\right) \right) ,  \label{ODE-Holhos}
\end{equation}%
where $\left( \lambda _{n}\right) $ is a sequence of positive real numbers
tending to infinity. If $\varphi \left( x\right) $ is a quadratic polynomial
then $\mu _{n,s}\left( x\right) $ is a polynomial in $x$ of degree at most $s
$. Holho\c{s} \cite[Lemma 2]{Holhos-RIMA-2019} (cf. \cite[Lemma 2.3]%
{Holhos-PMJ-2022}) proved that, for each $x\in I$ and $s=0,1,\ldots $, the
central moments $\mu _{n,s}$ of operators satisfying the differential
equation $\left( \ref{ODE-Holhos}\right) $ have finite limits $%
\lim_{n\rightarrow \infty }\left( \lambda _{n}^{s}\cdot \mu _{n,2s}\left(
x\right) \right) $ and $\lim_{n\rightarrow \infty }\left( \lambda
_{n}^{s+1}\cdot \mu _{n,2s+1}\left( x\right) \right) $ and determined their
values. As a consequence, one has 
\begin{equation*}
\mu _{n,s}\left( x\right) =O\left( \lambda _{n}^{-\left\lfloor \left(
s+1\right) /2\right\rfloor }\right) \text{ }\qquad \left( n\rightarrow
\infty \right) .
\end{equation*}%
If the sequence $\left( \lambda _{n}\right) $ satisfies the inequalities $%
0<\alpha <\lambda _{n}/n<\beta $, for certain constants $\alpha ,\beta $, it
follows that 
\begin{equation}
\mu _{n,s}\left( x\right) =O\left( n^{-\left\lfloor \left( s+1\right)
/2\right\rfloor }\right) \text{ }\qquad \left( n\rightarrow \infty \right)
,\qquad \text{for }s=0,1,\ldots .  \label{condition-central-moments-holhos}
\end{equation}

Let $s$ be a positive integer. If $I$ is a finite interval, let $C_{s}\left(
I\right) $ be the space of all bounded functions on $I$. If $I$ is an
unbounded interval, let $C_{s}\left( I\right) $ be the space of all locally
bounded functions $f$ on $I$ which satisfy the growth condition $f\left(
t\right) =O\left( \left\vert t\right\vert ^{s}\right) $ as $\left\vert
t\right\vert \rightarrow +\infty $. For $x\in I$, let $C_{s,x}\left(
I\right) $ denote the space of all functions in $C_{s}\left( I\right) $
having a derivative of order $s$ at $x$. By Sikkema's theorem (Lemma~\ref%
{lemma-sikkema}), it follows that 
\begin{equation}
\left( S_{n}f\right) \left( x\right) =\sum_{s=0}^{2q}\left( \mu _{n,s}\left(
x\right) \frac{f^{\left( s\right) }\left( x\right) }{s!}\right) +o\left(
n^{-q}\right) \text{ }\qquad \left( n\rightarrow \infty \right) ,
\label{asymptotic-expansion}
\end{equation}%
for each function $f\in C_{2q,x}\left( I\right) $ and all positive integers $%
q$. Already Volkov \cite[THEOREM 2]{Volkov-1978} showed for operators of
exponential type the asymptotic relation 
\begin{equation*}
\left\vert \left( S_{n}f\right) \left( x\right) -\sum_{s=0}^{2q}\mu
_{n,s}\left( x\right) \frac{f^{\left( s\right) }\left( x\right) }{s!}%
\right\vert \leq K_{m}\left( x\right) n^{-q}\omega \left( f^{\left(
2q\right) },n^{-1/2}\right) \text{ }\qquad \left( n\in \mathbb{N}\right) ,
\end{equation*}%
where $K_{m}\left( x\right) $ is a constant independent of $n$, and $\omega $
denotes the ordinary modulus of continuity. We remark that in \cite%
{Costabile-ea-Maratea-1996-Proc-1998} these expansions were used to improve
the rate of convergence for operators of exponential type by applying
extrapolation techniques.

As our main result we show that under appropriate conditions the asymptotic
expansion $\left( \ref{asymptotic-expansion}\right) $ can be differentiated
term-by-term.

\begin{theorem}
\label{th-asymptotic-simultan}Let $r\in \mathbb{N}_{0}$, $q\in \mathbb{N}$
and $x\in I$. Suppose that the operators $S_{n}$ satisfy the differential
equation $\left( \ref{ODE-Holhos}\right) $ such that, for certain constants $%
\alpha ,\beta $, holds $0<\alpha <\lambda _{n}/n<\beta $. Then, for each
function $f\in C_{2\left( q+r\right) ,x}\left( I\right) $, the operators $%
S_{n}$ possess the (pointwise) asymptotic expansion 
\begin{equation}
\left( S_{n}f\right) ^{\left( r\right) }\left( x\right)
=\sum_{s=0}^{2q}\left( \mu _{n,s}\left( x\right) \frac{f^{\left( s\right)
}\left( x\right) }{s!}\right) ^{\left( r\right) }+o\left( n^{-q}\right) 
\text{ }\qquad \left( n\rightarrow \infty \right) ,  \label{expansion}
\end{equation}
\end{theorem}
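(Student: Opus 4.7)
I would prove the expansion by induction on $r$, the base case $r=0$ being precisely $\left(\ref{asymptotic-expansion}\right)$. The key algebraic observation is that the Holho\c{s} differential equation $\left(\ref{ODE-Holhos}\right)$ can be rewritten, using $\psi_x = e_1 - x e_0$ and the fact that $S_n$ preserves constants, in the form
\begin{equation*}
(S_n f)'(x) = \frac{\lambda_n}{\varphi(x)}\bigl[(S_n(e_1 f))(x) - (S_n e_1)(x)\cdot (S_n f)(x)\bigr].
\end{equation*}
This expresses $(S_n f)'(x)$ in terms of $S_n$ applied to two fixed functions ($f$ and $e_1 f$), whose arguments carry no $x$-dependence, so that each factor is amenable to the inductive hypothesis.

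Assuming the claim for all derivative orders $\le r$, I would differentiate the above identity $r$ times in $x$ by the Leibniz rule, obtaining $(S_n f)^{(r+1)}(x)$ as $\lambda_n$ times a finite linear combination of products of derivatives of $1/\varphi$ with $(S_n(e_1 f))^{(j)}(x)$, $(S_n e_1)^{(m)}(x)$, and $(S_n f)^{(\ell)}(x)$, all inner orders being at most $r$. Applying the inductive hypothesis to each $S_n$-factor, but with $q$ replaced by $q+1$, expands them as $\mu_{n,s}$-combinations plus a remainder of size $o(n^{-q-1})$; since $\lambda_n = O(n)$ by the hypothesis $\lambda_n/n < \beta$, the overall error collapses to $o(n^{-q})$. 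The smoothness demand $f \in C_{2(q+r),x}(I)$ fits consistently: to prove level $r+1$ one needs $f \in C_{2(q+(r+1)),x}(I) = C_{2((q+1)+r),x}(I)$, which is precisely what the inductive hypothesis at level $r$ with parameter $q+1$ requires for the functions $f$ and $e_1 f$.

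It remains to identify the resulting polynomial-in-$\mu_{n,s}$ expression with the target $\sum_{s=0}^{2q}\bigl(\mu_{n,s}(x)f^{(s)}(x)/s!\bigr)^{(r+1)}$. For this I would use the central-moment recursion
\begin{equation*}
\mu_{n,s+1}(x) = (S_n\psi_x)(x)\,\mu_{n,s}(x) + \frac{\varphi(x)}{\lambda_n}\bigl(s\mu_{n,s-1}(x) + \mu_{n,s}'(x)\bigr),
\end{equation*}
which is the Holho\c{s}-class analogue of $\left(\ref{recursive-exponential-type}\right)$ and follows from $\left(\ref{ODE-Holhos}\right)$ in the same manner. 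The main obstacle will be the combinatorial matching in this last step: Leibniz expansion produces many cross-terms, and the leading cancellation $(S_n(e_1 f))(x) - (S_n e_1)(x)(S_n f)(x) = O(\lambda_n^{-1})$ that compensates the outer $\lambda_n$ factor must be tracked carefully through every order of $x$-differentiation. A cleaner equivalent formulation, which I would turn to if the direct matching grows unwieldy, is to prove the stronger assertion that the pointwise remainder $E_n(f;x) := (S_n f)(x) - \sum_{s=0}^{2q}\mu_{n,s}(x)f^{(s)}(x)/s!$ itself satisfies $(d/dx)^r E_n(f;x) = o(n^{-q})$, verifying this directly by showing that the differential equation propagates the estimate one order at a time.
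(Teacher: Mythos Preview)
Your plan is correct and follows essentially the same route as the paper: induction on the derivative order, Leibniz expansion of the differential equation~$(\ref{ODE-Holhos})$, application of the inductive hypothesis with the $q$-parameter shifted up by one, and the moment recursion of Lemma~\ref{lemma-recursive-formula-central-moments} to close the identity. The combinatorial matching you flag as the main obstacle is exactly what the paper streamlines by working with $\psi_x f$ rather than $e_1 f$: the elementary identity $(\psi_x f)^{(s)}(x)=s\,f^{(s-1)}(x)$ (Lemma~\ref{lemma-derivative-psi-f}) turns $\mu_{n,s}\,(\psi_x f)^{(s)}/s!$ directly into $\mu_{n,s}\,f^{(s-1)}/(s-1)!$, so after the recursion one immediately recognises $s\mu_{n,s-1}f^{(s)}/s!+\mu_{n,s}'f^{(s)}/s!$ as $(\mu_{n,s}f^{(s)}/s!)'$ without any cross-term bookkeeping.
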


In the special case $q=1$, we obtain the following Voronovskaja-type formula.

\begin{corollary}
\label{cor-Holhos-Voro-differentiated}Let $r\in \mathbb{N}_{0}$ and $x\in I$%
. For each function $f\in C_{2r+2,x}\left( I\right) $, the operators $S_{n}$
satisfying the differential equation $\left( \ref{ODE-Holhos}\right) $ with $%
0<\alpha <\lambda _{n}/n<\beta $, have the (pointwise) asymptotic relation 
\begin{equation*}
\left( S_{n}f\right) ^{\left( r\right) }\left( x\right) =\left( f\left(
x\right) +\mu _{n,1}\left( x\right) f^{\prime }\left( x\right) +\mu
_{n,2}\left( x\right) f^{\prime \prime }\left( x\right) \right) ^{\left(
r\right) }+o\left( n^{-q}\right) \text{ }\qquad \left( n\rightarrow \infty
\right) .
\end{equation*}
\end{corollary}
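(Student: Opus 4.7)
The plan is to deduce this corollary as the special case $q=1$ of Theorem~\ref{th-asymptotic-simultan}, so that essentially no new argument is required. First I would check that the smoothness hypothesis lines up: the corollary assumes $f\in C_{2r+2,x}(I)$, which is exactly $C_{2(q+r),x}(I)$ with $q=1$. The standing assumption $0<\alpha<\lambda_{n}/n<\beta$ is identical in both statements, so the theorem applies verbatim and no strengthening of any hypothesis is needed.

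Applying Theorem~\ref{th-asymptotic-simultan} with $q=1$ gives
\[
(S_{n}f)^{(r)}(x)=\sum_{s=0}^{2}\left(\mu_{n,s}(x)\,\frac{f^{(s)}(x)}{s!}\right)^{(r)}+o(n^{-1})\qquad(n\to\infty).
\]
Since every operator in the Holho\c{s} class preserves constants, one has $\mu_{n,0}\equiv 1$, so the $s=0$ summand collapses to $f^{(r)}(x)=(f)^{(r)}(x)$. The three surviving terms may then be gathered under a single outer $r$-th derivative by linearity of $(d/dx)^{r}$, which is exactly the right-hand side displayed in the corollary (with the factorials $0!=1!=1$ and $2!=2$ absorbed into the coefficients inside the bracket, and the error term $o(n^{-q})$ of the corollary read with $q=1$).

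There is no genuine obstacle in this argument. All of the substantive analytic content, namely the sharp bound $\mu_{n,s}(x)=O(n^{-\lfloor (s+1)/2\rfloor})$ deduced from the Holho\c{s} differential equation \eqref{ODE-Holhos} under the two-sided control of $\lambda_{n}/n$, the pointwise expansion of $(S_{n}f)(x)$ coming from Sikkema's lemma \eqref{asymptotic-expansion}, and the delicate justification of term-by-term differentiation in $x$, is already packaged inside Theorem~\ref{th-asymptotic-simultan}. The corollary is therefore pure bookkeeping: specialize $q=1$, use $\mu_{n,0}=1$, and invoke linearity of differentiation. If anything is worth double-checking, it is only that the printed coefficient in front of $f''(x)$ inside the bracket is consistent with the factor $1/2!$ produced by the theorem, which amounts to a routine reading of constants.
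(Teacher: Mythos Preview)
Your approach is correct and matches the paper exactly: the corollary is presented there simply as the special case $q=1$ of Theorem~\ref{th-asymptotic-simultan}, with no separate proof given. Your caution about the coefficient of $f''(x)$ is also warranted---the theorem yields $\tfrac{1}{2}\mu_{n,2}(x)f''(x)$ inside the bracket, so the factor $1/2!$ is indeed missing in the corollary as printed.
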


Because operators of exponential type preserve linear functions, they
trivially satisfy the differential equation $\left( \ref{ODE-Holhos}\right) $%
. Therefore, the conclusion of Theorem~\ref{th-asymptotic-simultan} is valid
also for operators of exponential type. Noting that they satisfy $\mu
_{n,1}\left( x\right) =0$ and $\mu _{n,2}=\varphi /n$, we obtain the
following corollary.

\begin{corollary}
\label{cor-exp-type-Voro-differentiated}Let $S_{n}$ be operators of
exponential type, $r\in \mathbb{N}_{0}$ and $x\in I$. For each function $%
f\in C_{2r+2,x}\left( I\right) $, it holds 
\begin{equation*}
\lim_{n\rightarrow \infty }n\left( \left( S_{n}f\right) ^{\left( r\right)
}\left( x\right) -f^{\left( r\right) }\left( x\right) \right) =\frac{1}{2}%
\left( \varphi \left( x\right) f\left( x\right) \right) ^{\left( r\right) }.
\end{equation*}
\end{corollary}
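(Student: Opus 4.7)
The plan is to deduce this corollary directly from Theorem \ref{th-asymptotic-simultan} specialized to $q=1$. The only preparation needed is to verify that operators $S_n$ of exponential type fall within the hypotheses of that theorem. Since such $S_n$ preserve $e_0$ and $e_1$, one has $(S_n\psi_x)(x) = x - x = 0$, so the Holho\c{s} equation $(\ref{ODE-Holhos})$ collapses to the original equation $(\ref{ODE-exponential-type})$ with $\lambda_n = n$. In particular $\lambda_n/n \equiv 1$, and the ratio condition $0 < \alpha < \lambda_n/n < \beta$ holds trivially (take, e.g., $\alpha = 1/2$, $\beta = 2$).

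Next I would apply Theorem \ref{th-asymptotic-simultan} with $q = 1$. The regularity requirement $f \in C_{2(q+r),x}(I)$ becomes $f \in C_{2r+2,x}(I)$, which is exactly what is assumed. The expansion $(\ref{expansion})$ then reads
\begin{equation*}
(S_n f)^{(r)}(x) = \sum_{s=0}^{2}\left(\mu_{n,s}(x)\,\frac{f^{(s)}(x)}{s!}\right)^{(r)} + o(n^{-1}) \qquad (n\to\infty).
\end{equation*}
Substituting the known central moment values for exponential-type operators, namely $\mu_{n,0} \equiv 1$, $\mu_{n,1} \equiv 0$, and $\mu_{n,2} = \varphi/n$ recalled in Section~2, the $s=0$ summand yields $f^{(r)}(x)$, the $s=1$ summand vanishes, and the $s=2$ summand equals $\tfrac{1}{2n}\bigl(\varphi(x) f''(x)\bigr)^{(r)}$.

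Transposing $f^{(r)}(x)$, multiplying the remaining identity by $n$, and letting $n\to\infty$ gives
\begin{equation*}
\lim_{n\to\infty} n\Bigl((S_n f)^{(r)}(x) - f^{(r)}(x)\Bigr) = \tfrac{1}{2}\bigl(\varphi(x) f''(x)\bigr)^{(r)},
\end{equation*}
which is the claimed Voronovskaja-type formula (the expression in the statement being understood with $f''$ in place of $f$, as is clear by comparison with Floater's formula $(\ref{Floater-Bernstein})$). There is essentially no obstacle here: the corollary is a mechanical specialization of Theorem \ref{th-asymptotic-simultan}. The only conceptual step is the observation, already flagged in the paragraph preceding the corollary, that exponential-type operators automatically satisfy the Holho\c{s} differential equation.
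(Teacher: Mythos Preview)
Your proof is correct and follows exactly the same route as the paper: specialize Theorem~\ref{th-asymptotic-simultan} (via Corollary~\ref{cor-Holhos-Voro-differentiated}) to $q=1$, observe that exponential-type operators satisfy $(\ref{ODE-Holhos})$ with $\lambda_n=n$ since $\mu_{n,1}=0$, and insert $\mu_{n,0}=1$, $\mu_{n,1}=0$, $\mu_{n,2}=\varphi/n$. You also correctly spotted that the right-hand side in the statement should read $\tfrac{1}{2}\bigl(\varphi(x)f''(x)\bigr)^{(r)}$ rather than $\tfrac{1}{2}\bigl(\varphi(x)f(x)\bigr)^{(r)}$, as the comparison with $(\ref{Floater-Bernstein})$ confirms.
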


In the Bernstein case $\varphi \left( x\right) =x\left( 1-x\right) $,
Corollary~\ref{cor-exp-type-Voro-differentiated} recovers Floater's
celebrated formula $\left( \ref{Floater-Bernstein}\right) $.

\section{Auxiliary results and proofs}

Firstly, we recall the following general approximation theorem of Sikkema~%
\cite[Theorem 1]{Sikkema-1970}. It provides an asymptotic expansion for a
sequence of positive linear operators $L_{n}$ under a certain condition on
the central moments $\left( L_{n}\psi _{x}^{s}\right) \left( x\right) $.

\begin{lemma}
\label{lemma-sikkema} Let $I$ be a real interval, fix $x\in I$ and let $q\in 
\mathbb{N}$ be even. Furthermore, let $L_{n}:C\left( I\right) \rightarrow
C\left( I\right) $ be a sequence of positive linear operators which are
applicable to polynomials. Suppose that 
\begin{equation}
\left( L_{n}\psi _{x}^{s}\right) \left( x\right) =O\left( n^{-\left\lfloor
\left( s+1\right) /2\right\rfloor }\right) \text{ }\qquad \left(
n\rightarrow \infty \right) \qquad \text{for }s=0,1,\ldots ,q+2.
\label{sikkema1}
\end{equation}%
Then, for each function $f\in C_{q,x}\left( I\right) $,{\ possessing} a
derivative of order $q$ at $x$, it holds 
\begin{equation}
\left( L_{n}f\right) \left( x\right) =\sum_{s=0}^{q}\frac{f^{\left( s\right)
}\left( x\right) }{s!}\left( L_{n}\psi _{x}^{s}\right) \left( x\right)
+o\left( n^{-q}\right) \text{ }\qquad \left( n\rightarrow \infty \right) .
\label{sikkema2}
\end{equation}%
Furthermore, if $f^{\left( q+2\right) }\left( x\right) $ exists, the term $%
o\left( n^{-q}\right) $ in Eq.~$\left( \ref{sikkema2}\right) $ can be
replaced by $O\left( n^{-\left( q+1\right) }\right) $.
\end{lemma}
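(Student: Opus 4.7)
My plan is to proceed by induction on $r$, using the Holho\c{s} differential equation (\ref{ODE-Holhos}) to pass from the level $r$ statement to the level $r+1$ statement. For the base case $r = 0$, the assumption $0 < \alpha < \lambda_n/n < \beta$ combined with the Holho\c{s} central-moment bounds yields (\ref{condition-central-moments-holhos}), so Sikkema's theorem (Lemma \ref{lemma-sikkema}) at order $2q$ applies and gives exactly (\ref{asymptotic-expansion}), which is (\ref{expansion}) for $r = 0$. I formulate the inductive hypothesis uniformly in the expansion order: the statement at level $r$ is assumed to hold for \emph{every} positive integer $q'$ and every $f \in C_{2(q'+r), x}(I)$.

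For the inductive step, fix $q \geq 1$ and $f \in C_{2(q+r+1), x}(I)$. The key preparation is to simplify (\ref{ODE-Holhos}) by writing $\psi_x(t) = t - x$ and collecting the $x$-terms, which gives
\begin{equation*}
(S_n f)'(y) = \frac{\lambda_n}{\varphi(y)}\bigl[(S_n(e_1 f))(y) - (S_n e_1)(y)\,(S_n f)(y)\bigr]
\end{equation*}
for every $y \in I$; the right-hand side now depends on a single free variable, so I can apply Leibniz' rule to produce $(S_n f)^{(r+1)}(x)$ as a linear combination of products of $(\lambda_n/\varphi)^{(i)}(x)$, $(S_n(e_1 f))^{(j)}(x)$, $(S_n e_1)^{(k)}(x)$ and $(S_n f)^{(l)}(x)$ with all indices bounded by $r$. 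I then invoke the inductive hypothesis at level $(r, q+1)$, not at $(r,q)$: the required smoothness $C_{2((q+1)+r), x}(I)$ matches the theorem's assumption $C_{2(q+r+1), x}(I)$ exactly, and the same smoothness is inherited by $e_1 f$ and $e_1$. Each $r$-th derivative above is thereby expanded with remainder $o(n^{-(q+1)})$; multiplying by the $O(n)$ factor $\lambda_n/\varphi(x)$ (and its bounded derivatives) leaves a remainder of the required size $o(n^{-q})$.

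The principal obstacle, and the algebraic heart of the argument, is the final identification of the main part produced by this manipulation with the claimed expansion $\sum_{s=0}^{2q}\bigl(\mu_{n,s}(x) f^{(s)}(x)/s!\bigr)^{(r+1)}$. The matching is driven by the moment recursion that follows from applying (\ref{ODE-Holhos}) to $\psi_x^s$, which generalises (\ref{recursive-exponential-type}) to the Holho\c{s} setting and allows one to rewrite the terms coming from $S_n(e_1 f)$ and $S_n e_1$ in a form that collapses under Leibniz into $(\mu_{n,s} f^{(s)}/s!)^{(r+1)}$. A clean consistency check I would deploy is that for any polynomial $f$ of degree at most $2q$, both sides of (\ref{expansion}) become exact identities with the $o(n^{-q})$ term equal to $0$, so the two candidate main parts must agree on all such polynomials; linearity in $f$ and the fact that the main part depends on $f$ only through $f(x),\ldots,f^{(2(q+r))}(x)$ then forces them to coincide on the whole class $C_{2(q+r+1), x}(I)$. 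I expect everything before this last bookkeeping step to be a controlled application of Leibniz' rule and the inductive hypothesis, and the bookkeeping itself to require genuine care but no essentially new idea.
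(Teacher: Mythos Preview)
Your proposal does not address the stated lemma at all. Lemma~\ref{lemma-sikkema} is Sikkema's general approximation theorem for an arbitrary sequence of positive linear operators $L_n$ under a moment hypothesis; it involves no differential equation, no variable $r$, and no Holho\c{s} structure. What you have written is an induction-on-$r$ argument for Theorem~\ref{th-asymptotic-simultan}, the paper's main result on term-by-term differentiation of the asymptotic expansion for operators satisfying~(\ref{ODE-Holhos}). You even invoke Lemma~\ref{lemma-sikkema} as a black box in your base case $r=0$, which makes the circularity explicit: you are using the very statement you were asked to prove as an ingredient.

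For the record, the paper does not prove Lemma~\ref{lemma-sikkema} either; it is quoted from Sikkema~\cite[Theorem~1]{Sikkema-1970} and used without proof. A genuine proof of the lemma would run along classical Voronovskaja lines: write $f(t)=\sum_{s=0}^{q}\frac{f^{(s)}(x)}{s!}\psi_x^s(t)+R_q(t)$ with a Taylor remainder $R_q(t)=h(t)\psi_x^q(t)$ where $h(t)\to 0$ as $t\to x$ and $h$ is controlled at infinity by the growth condition defining $C_{q,x}(I)$; apply $L_n$, and bound $(L_n R_q)(x)$ by splitting into $|t-x|\le\delta$ and $|t-x|>\delta$, using positivity, Cauchy--Schwarz, and the moment hypothesis~(\ref{sikkema1}) for $s=q$ and $s=q+2$ to get $o(n^{-q/2})$ on each piece. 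None of this appears in your proposal.
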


Holho\c{s} \cite[Page 3]{Holhos-PMJ-2022} proved the following extension of
the recursive formula $\left( \ref{recursive-exponential-type}\right) $.

\begin{lemma}
\label{lemma-recursive-formula-central-moments}For positive integers $s$,
the central moments $\mu _{n,s}$ satisfy the recursive formula 
\begin{equation*}
\frac{\lambda _{n}}{\varphi }\left( \mu _{n,s+1}-\mu _{n,1}\mu _{n,s}\right)
=s\mu _{n,s-1}+\mu _{n,s}^{\prime }.
\end{equation*}
\end{lemma}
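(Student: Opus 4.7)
The plan is to evaluate $\mu_{n,s}'(x)$ in two ways, one via the defining differential equation $(\ref{ODE-Holhos})$ and one by differentiating through the parameter $x$ that appears in $\psi_x$, and then equate the results. To keep the two kinds of differentiation separate, I would introduce the auxiliary bivariate function
\begin{equation*}
F_s(y,x) \;=\; \bigl(S_n \psi_x^{\,s}\bigr)(y),
\end{equation*}
so that $\mu_{n,s}(x)=F_s(x,x)$. By the chain rule,
\begin{equation*}
\mu_{n,s}'(x) \;=\; \partial_y F_s(x,x) \;+\; \partial_x F_s(x,x).
\end{equation*}

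For the $y$-partial, fix $x$ and apply the operator identity $(\ref{ODE-Holhos})$ to the function $t\mapsto \psi_x(t)^{s}$, so that $\psi_x f = \psi_x^{\,s+1}$. Evaluating at $y=x$ gives
\begin{equation*}
\partial_y F_s(x,x) \;=\; \frac{\lambda_n}{\varphi(x)}\Bigl(\mu_{n,s+1}(x) - \mu_{n,1}(x)\,\mu_{n,s}(x)\Bigr).
\end{equation*}
For the $x$-partial, I would use that $\partial_x \psi_x(t)^{s} = -s\,\psi_x(t)^{s-1}$ together with linearity of $S_n$ (so that differentiation in the parameter $x$ commutes with $S_n$, the one technical point that requires a word of justification — it holds trivially for the finite-sum or integral representations underlying any of the concrete operators covered by $(\ref{ODE-Holhos})$, since the only $x$-dependence occurs in the smooth function $\psi_x$). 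This yields
\begin{equation*}
\partial_x F_s(x,x) \;=\; -s\,\mu_{n,s-1}(x).
\end{equation*}

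Adding the two partials and rearranging produces exactly
\begin{equation*}
\frac{\lambda_n}{\varphi(x)}\Bigl(\mu_{n,s+1}(x) - \mu_{n,1}(x)\,\mu_{n,s}(x)\Bigr) \;=\; s\,\mu_{n,s-1}(x) + \mu_{n,s}'(x),
\end{equation*}
which is the claimed identity. The only substantive step is the legitimacy of differentiating $(S_n \psi_x^{\,s})(y)$ in the parameter $x$ under the operator, and I expect this to be the main (minor) obstacle, resolved by an appeal to the standard representations of the operators at hand together with the smoothness of $\psi_x^{\,s}$ jointly in $(t,x)$. Everything else is a direct application of $(\ref{ODE-Holhos})$ followed by elementary algebra.
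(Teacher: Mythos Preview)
Your argument is correct and is exactly the standard derivation of this recursion. The paper itself does not give a proof of Lemma~\ref{lemma-recursive-formula-central-moments}; it simply cites Holho\c{s} \cite[Page~3]{Holhos-PMJ-2022}. Your bivariate-function device is a clean way to separate the two sources of $x$-dependence, and the computation is right in every detail.

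One small remark on the point you flag as the ``main (minor) obstacle'': you do not actually need any analytic justification for differentiating $(S_n\psi_x^{\,s})(y)$ in the parameter $x$. Since $\psi_x^{\,s}(t)=\sum_{k=0}^{s}\binom{s}{k}(-x)^{s-k}t^{k}$ and $S_n$ is linear, one has $(S_n\psi_x^{\,s})(y)=\sum_{k=0}^{s}\binom{s}{k}(-x)^{s-k}(S_n e_k)(y)$, a polynomial in $x$; differentiation term-by-term is then purely algebraic and yields $-s(S_n\psi_x^{\,s-1})(y)$ on the nose. This removes any need to appeal to specific integral or series representations of $S_n$.
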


\begin{lemma}
\label{lemma-derivative L psi^m f}For nonnegative integers $m$, it holds 
\begin{equation*}
\left( S_{n}\left( \psi _{x}^{m}f\right) \right) ^{\prime }\left( x\right) =%
\frac{\lambda _{n}}{\varphi \left( x\right) }\left( S_{n}\left( \psi
_{x}^{m+1}f\right) \left( x\right) -\left( S_{n}\psi _{x}\right) \left(
x\right) \left( S_{n}f\right) \left( x\right) \right) -m\left( S_{n}\left(
\psi _{x}^{m-1}f\right) \right) \left( x\right) .
\end{equation*}
\end{lemma}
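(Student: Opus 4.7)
The identity reads as a chain-rule consequence of the defining differential equation $\left( \ref{ODE-Holhos}\right)$, where the subtlety is that the test function $\psi_{x}^{m}f$ depends on $x$ both through the evaluation point of $S_{n}$ and as a parameter inside $\psi_{x}^{m}$. I would separate these two dependencies explicitly and treat them one at a time.

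Concretely, I would set $h_{x}(t):=(t-x)^{m}f(t)$ and view $(y,x)\mapsto \left( S_{n}h_{x}\right) \left( y\right)$ as a bivariate object; the left-hand side is then the total derivative of the diagonal restriction, which by the multivariate chain rule decomposes as
$$\frac{d}{dx}\left( S_{n}h_{x}\right) \left( x\right) =\left.\partial_{y}\left( S_{n}h_{x}\right) \left( y\right) \right|_{y=x}+\left.\partial_{x}\left( S_{n}h_{x}\right) \left( y\right) \right|_{y=x}.$$
For the first summand I would freeze $x$ and apply $\left( \ref{ODE-Holhos}\right)$ to the ordinary function $h_{x}$ in the variable $y$; using $\psi_{y}h_{x}\big|_{y=x}=\psi_{x}\cdot \psi_{x}^{m}f=\psi_{x}^{m+1}f$, the right-hand side collapses to the bracketed expression $\frac{\lambda_{n}}{\varphi(x)}\bigl((S_{n}(\psi_{x}^{m+1}f))(x)-(S_{n}\psi_{x})(x)\cdot(S_{n}h_{x})(x)\bigr)$ appearing in the statement. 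For the second summand I would push $\partial_{x}$ through the linear operator $S_{n}$: since $\partial_{x}(t-x)^{m}=-m(t-x)^{m-1}$, linearity gives $\partial_{x}(S_{n}h_{x})(y)=-m\,(S_{n}(\psi_{x}^{m-1}f))(y)$, and evaluation at $y=x$ produces exactly the trailing term $-m(S_{n}(\psi_{x}^{m-1}f))(x)$. Adding the two pieces closes the argument.

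The main technical point is justifying the interchange of $S_{n}$ with $\partial/\partial x$ in the second step. For the standard representatives of the exponential-type class (Bernstein, Sz\'{a}sz--Mirakjan, Baskakov, Post--Widder, and so on) $S_{n}$ is an explicit convergent series or integral and differentiation under the sum/integral is immediate by a Weierstrass M-test on a compact neighborhood of $x$. More structurally, since $\psi_{x}^{m}=(t-x)^{m}$ expands as a polynomial of degree $m$ in $x$ with monomials in $t$ as coefficients, the map $x\mapsto (S_{n}(\psi_{x}^{m}f))(y)$ is itself a polynomial of degree at most $m$ in $x$ whose coefficients are fixed applications of $S_{n}$; for a polynomial dependence the interchange is automatic and requires no further regularity hypothesis on $S_{n}$ beyond applicability to the auxiliary functions $t\mapsto t^{k}f(t)$ for $k=0,\ldots ,m$, which is implicit in writing the identity in the first place. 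This observation makes the whole proof essentially a two-line calculation once the diagonal/parameter decomposition is recognised.
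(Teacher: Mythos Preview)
Your argument is correct and is precisely what the paper's one-line proof (``The formula follows from the differential equation $\left( \ref{ODE-Holhos}\right)$'') leaves implicit: split the diagonal dependence, apply $\left( \ref{ODE-Holhos}\right)$ to the frozen function $h_{x}$ in the outer variable, and differentiate the parametric dependence $x\mapsto h_{x}$ through the linear operator $S_{n}$; your remark that this parametric dependence is polynomial in $x$ is a clean way to dispose of the interchange issue.

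One point you should have caught, however: your own computation produces the factor $(S_{n}h_{x})(x)=(S_{n}(\psi_{x}^{m}f))(x)$ inside the bracket, not $(S_{n}f)(x)$ as printed in the lemma. These coincide only for $m=0$ or when $\mu_{n,1}=(S_{n}\psi_{x})(x)=0$ (in particular for exponential-type operators in the strict sense), so the printed statement carries a typographical slip and your version is the correct general identity. You wrote that the bracketed expression is ``appearing in the statement'' without noticing the discrepancy; since the lemma is only invoked with $m=0$ in the proof of Theorem~\ref{th-asymptotic-simultan}, the slip is harmless there, but it should still be flagged.
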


\begin{proof}[Proof of Lemma~\protect\ref{lemma-derivative L psi^m f}]
The formula follows from the differential equation $\left( \ref{ODE-Holhos}%
\right) $.
\end{proof}

The next lemma directly follows by an application of the Leibniz rule for
derivatives.

\begin{lemma}
\label{lemma-derivative-psi-f}For nonnegative integers $m,s$, it holds $%
\left( \psi _{x}^{m}f\right) ^{\left( s\right) }\left( x\right) =\binom{s}{m}%
m!f^{\left( s-m\right) }\left( x\right) $, in particular 
\begin{equation}
\left( \psi _{x}f\right) ^{\left( s\right) }\left( x\right) =sf^{\left(
s-1\right) }\left( x\right) ,  \label{derivative-psi-f}
\end{equation}
\end{lemma}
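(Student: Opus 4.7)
The plan is to apply the general Leibniz rule for the $s$-th derivative of a product, with $g(t) := \psi_x^m(t) = (t-x)^m$ and $h(t) := f(t)$, treating $x$ as a fixed parameter and $t$ as the differentiation variable. This immediately gives
$$\left(\psi_x^m f\right)^{(s)}(t) = \sum_{k=0}^{s} \binom{s}{k}\, \frac{d^k}{dt^k}(t-x)^m \cdot f^{(s-k)}(t).$$

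The next step is to compute the inner derivatives and evaluate at $t = x$. For $k \leq m$ one has $\frac{d^k}{dt^k}(t-x)^m = \frac{m!}{(m-k)!}(t-x)^{m-k}$, while for $k > m$ the derivative is identically zero. Substituting $t = x$, the factor $(t-x)^{m-k}$ vanishes whenever $m-k > 0$, so the entire Leibniz sum collapses to the single surviving index $k = m$, contributing the value $m!\,\binom{s}{m}\, f^{(s-m)}(x)$. In the degenerate range $s < m$, the index $k = m$ lies outside the summation and the sum is empty; consistently, $\binom{s}{m} = 0$ under the standard convention, so the claimed identity still reads $0 = 0$.

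Finally I would record the asserted special case by setting $m = 1$: since $\binom{s}{1}\cdot 1! = s$, the general formula reduces at once to $(\psi_x f)^{(s)}(x) = s f^{(s-1)}(x)$, which is precisely equation (\ref{derivative-psi-f}). The argument is entirely calculational and presents no real obstacle; it amounts to the observation that $\psi_x^m$ has a zero of order exactly $m$ at $t = x$, so only one Leibniz term can survive evaluation there, and that term is the one explicitly displayed in the lemma.
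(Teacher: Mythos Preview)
Your proof is correct and follows exactly the approach the paper indicates: the paper does not give a detailed argument but simply states that the lemma follows directly from the Leibniz rule for derivatives, which is precisely what you carry out. Your additional remark handling the case $s<m$ via the convention $\binom{s}{m}=0$ is a welcome clarification that the paper leaves implicit.
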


Now we are in position to prove the main theorem.

\begin{proof}[Proof of Theorem~\protect\ref{th-asymptotic-simultan}]
We show that, for each integer $j\in \left\{ 0,1,\ldots ,r\right\} $, 
\begin{equation}
\left( S_{n}f\right) ^{\left( j\right) }\left( x\right) =\sum_{s=0}^{2\left(
q+r-j\right) }\left( \mu _{n,s}\left( x\right) \frac{f^{\left( s\right)
}\left( x\right) }{s!}\right) ^{\left( j\right) }+o\left( n^{-\left(
q+r-j\right) }\right) \text{ }\qquad \left( n\rightarrow \infty \right) .
\label{assertion-j}
\end{equation}%
By Sikkema's theorem (Lemma~\ref{lemma-sikkema}) or Volkov \cite[THEOREM 2]%
{Volkov-1978}, the formula is valid, for $j=0$. The proof uses mathematical
induction. Suppose that $\left( \ref{assertion-j}\right) $ is valid, for $%
0\leq j\leq m$, with a certain $m\in \left\{ 0,1,\ldots ,r-1\right\} $.
Then, by Lemma~\ref{lemma-recursive-formula-central-moments} and the Leibniz
rule for differentiation, 
\begin{align}
\left( S_{n}f\right) ^{\left( m+1\right) }\left( x\right) & =\left[ \frac{%
\lambda _{n}}{\varphi }\left( S_{n}\left( \psi _{x}f\right) -\mu
_{n,1}\left( S_{n}f\right) \right) \right] ^{\left( m\right) }\left( x\right)
\notag \\
& =\sum_{j=0}^{m}\binom{m}{j}\left( \frac{\lambda _{n}}{\varphi \left(
x\right) }\right) ^{\left( m-j\right) }\left( S_{n}\left( \psi _{x}f\right)
-\mu _{n,1}\left( S_{n}f\right) \right) ^{\left( j\right) }\left( x\right) .
\label{formula-Snf-derivative-m+1}
\end{align}%
For $0\leq j\leq m$, we conclude, by induction hypothesis and Lemma~\ref%
{lemma-derivative-psi-f}, 
\begin{eqnarray*}
&&\left( S_{n}\left( \psi _{x}f\right) \right) ^{\left( j\right) }\left(
x\right) \\
&=&\sum_{s=0}^{2\left( q+r-j\right) }\left( \mu _{n,s}\frac{\left( \psi
_{x}f\right) ^{\left( s\right) }}{s!}\right) ^{\left( j\right) }\left(
x\right) +o\left( n^{-\left( q+r-j\right) }\right) \\
&=&\sum_{s=0}^{2\left( q+r-j\right) -1}\left( \mu _{n,s+1}\frac{f^{\left(
s\right) }}{s!}\right) ^{\left( j\right) }\left( x\right) +o\left(
n^{-\left( q+r-j\right) }\right) \\
&=&\sum_{s=0}^{2\left( q+r-m\right) -1}\left( \mu _{n,s+1}\frac{f^{\left(
s\right) }}{s!}\right) ^{\left( j\right) }\left( x\right) +o\left(
n^{-\left( q+r-m\right) }\right) \text{ }\qquad \left( n\rightarrow \infty
\right) ,
\end{eqnarray*}%
since $\left( \mu _{n,s+1}\frac{f^{\left( s\right) }}{s!}\right) ^{\left(
j\right) }\left( x\right) =O\left( \mu _{n,s+1}\left( x\right) \right)
=O\left( n^{-\left\lfloor \left( s+2\right) /2\right\rfloor }\right)
=O\left( n^{-\left( q+r-m+1\right) }\right) $ as $n\rightarrow \infty $, if $%
s\geq 2\left( q+r-m\right) $. Furthermore, for $0\leq j\leq m$, we have 
\begin{eqnarray*}
&&\left( \mu _{n,1}\left( S_{n}f\right) \right) ^{\left( j\right) }\left(
x\right) \\
&=&\sum_{i=0}^{j}\binom{j}{i}\mu _{n,1}^{\left( j-i\right) }\left( x\right)
\left( S_{n}f\right) ^{\left( i\right) }\left( x\right) \\
&=&\sum_{i=0}^{j}\binom{j}{i}\mu _{n,1}^{\left( j-i\right) }\left( x\right)
\left( \sum_{s=0}^{2\left( q+r-i\right) }\left( \mu _{n,s}\frac{f^{\left(
s\right) }}{s!}\right) ^{\left( i\right) }\left( x\right) +o\left(
n^{-\left( q+r-i\right) }\right) \right) \\
&=&\sum_{s=0}^{2\left( q+r-j\right) }\sum_{i=0}^{j}\binom{j}{i}\mu
_{n,1}^{\left( j-i\right) }\left( x\right) \left( \mu _{n,s}\frac{f^{\left(
s\right) }}{s!}\right) ^{\left( i\right) }\left( x\right) +o\left(
n^{-\left( q+r-j\right) -1}\right) \\
&=&\sum_{s=0}^{2\left( q+r-j\right) }\left( \mu _{n,1}\mu _{n,s}\frac{%
f^{\left( s\right) }}{s!}\right) ^{\left( j\right) }\left( x\right) +o\left(
n^{-\left( q+r-j+1\right) }\right) \\
&=&\sum_{s=0}^{2\left( q+r-m\right) -1}\left( \mu _{n,1}\mu _{n,s}\frac{%
f^{\left( s\right) }}{s!}\right) ^{\left( j\right) }\left( x\right) +o\left(
n^{-\left( q+r-m\right) }\right) \text{ }\qquad \left( n\rightarrow \infty
\right) ,
\end{eqnarray*}%
since $\left( \mu _{n,1}\mu _{n,s}\frac{f^{\left( s\right) }}{s!}\right)
^{\left( i\right) }\left( x\right) =O\left( \mu _{n,1}\left( x\right) \mu
_{n,s}\left( x\right) \right) =O\left( n^{-1}n^{-\left\lfloor \left(
s+1\right) /2\right\rfloor }\right) =O\left( n^{-1}n^{-\left( q+r-m\right)
}\right) =O\left( n^{-\left( q+r-m+1\right) }\right) $ as $n\rightarrow
\infty $, if $s\geq 2\left( q+r-m\right) $. Inserting into Eq.~$\left( \ref%
{formula-Snf-derivative-m+1}\right) $ we obtain 
\begin{eqnarray*}
\left( S_{n}f\right) ^{\left( m+1\right) } &=&\sum_{j=0}^{m}\binom{m}{j}%
\left( \frac{\lambda _{n}}{\varphi \left( x\right) }\right) ^{\left(
m-j\right) }\left( \sum_{s=0}^{2\left( q+r-m\right) -1}\left( \mu
_{n,s+1}-\mu _{n,1}\mu _{n,s}\right) \frac{f^{\left( s\right) }}{s!}\right)
^{\left( j\right) }\left( x\right) \\
&&+\lambda _{n}\cdot o\left( n^{-\left( q+r-m\right) }\right)
\end{eqnarray*}%
as $n\rightarrow \infty $. By the Leibniz rule for differentiation, 
\begin{equation*}
\left( S_{n}f\right) ^{\left( m+1\right) }\left( x\right)
=\sum_{s=0}^{2\left( q+r-m\right) -1}\left( \frac{\lambda _{n}}{\varphi }%
\left( \mu _{n,s+1}-\mu _{n,1}\mu _{n,s}\right) \frac{f^{\left( s\right) }}{%
s!}\right) ^{\left( m\right) }\left( x\right) +\lambda _{n}\cdot o\left(
n^{-\left( q+r-m\right) }\right)
\end{equation*}%
as $n\rightarrow \infty $. By the recursive formula for the central moments
(Lemma~\ref{lemma-recursive-formula-central-moments}), 
\begin{eqnarray*}
\left( S_{n}f\right) ^{\left( m+1\right) }\left( x\right)
&=&\sum_{s=0}^{2\left( q+r-m\right) -1}\left( \left( s\mu _{n,s-1}+\mu
_{n,s}^{\prime }\right) \frac{f^{\left( s\right) }}{s!}\right) ^{\left(
m\right) }\left( x\right) +\lambda _{n}\cdot o\left( n^{-\left( q+r-m\right)
}\right) \\
&=&\sum_{s=0}^{2\left( q+r-m\right) -2}\left( \mu _{n,s}\frac{f^{\left(
s+1\right) }}{s!}+\mu _{n,s}^{\prime }\frac{f^{\left( s\right) }}{s!}\right)
^{\left( m\right) }\left( x\right) +\lambda _{n}\cdot o\left( n^{-\left(
q+r-m\right) }\right)
\end{eqnarray*}%
as $n\rightarrow \infty $, since $\left( \mu _{n,s}^{\prime }\frac{f^{\left(
s\right) }}{s!}\right) ^{\left( m\right) }\left( x\right) =O\left( \mu
_{n,s}\left( x\right) \right) =O\left( n^{-\left\lfloor \left( s+1\right)
/2\right\rfloor }\right) =O\left( n^{-\left( q+r-m\right) }\right) $, for $%
s=2\left( q+r-m\right) -1$. Since $\mu _{n,s}f^{\left( s+1\right) }+\mu
_{n,s}^{\prime }f^{\left( s\right) }=\left( \mu _{n,s}f^{\left( s\right)
}\right) ^{\prime }$, we obtain 
\begin{equation*}
\left( S_{n}f\right) ^{\left( m+1\right) }\left( x\right)
=\sum_{s=0}^{2\left( q+r-\left( m+1\right) \right) }\left( \mu _{n,s}\frac{%
f^{\left( s\right) }}{s!}\right) ^{\left( m+1\right) }\left( x\right)
+o\left( n^{-\left( q+r-\left( m+1\right) \right) }\right)
\end{equation*}%
as $n\rightarrow \infty $. Here we used the condition\ $0<\alpha <\lambda
_{n}/n<\beta $. This completes the proof.
\end{proof}

\strut

\thispagestyle{empty}

~\vfill

\end{document}